\theoremstyle{plain}
\newtheorem{theorem}{Theorem}
\newtheorem{lemma}[theorem]{Lemma}
\newtheorem{proposition}[theorem]{Proposition}
\newtheorem{corollary}[theorem]{Corollary}
\theoremstyle{definition}
\theoremstyle{remark}
\newtheorem*{remark}{Remark}
\def\supp{\textrm{supp}\,}
\def\d{\textrm{d}}
\def\ae{\textrm{a.e.}}
\author{Chu-hee Cho}
\address[Chu-hee Cho] {Department of Mathematical Sciences and RIM, Seoul National University, Seoul 151--747, Republic of Korea}
\email{akilus@snu.ac.kr}
\author{Shobu Shiraki}
\address[Shobu Shiraki] {Department of Mathematics, Graduate school of Science and Engineering, Saitama University, Saitama, 338-8570, Japan}
\email{s.shiraki.446@ms.saitama-u.ac.jp}
\thanks{The first author was supported by 
	NRF grant No. 2017R1D1A1A02019547 (Republic of Korea).}
\begin{document}
\date{\today}
\title[Pointwise convergence along a curve]{Pointwise convergence along a tangential curve for the fractional Schr\"odinger equation}
\keywords{Fractional Schr\"odinger equation, pointwise convergence, fractional dimension}
\subjclass[2010]{35Q41}
\begin{abstract}
In this paper we study the pointwise convergence problem along a tangential curve for the fractional Schr\"odinger equations in one spatial dimension and estimate the capacitary dimension of the divergence set. We extend a prior paper by Lee and the first author for the classical Schr\"odinger equation, which in itself contains a result due to Lee, Vargas and the first author, to the fractional Schr\"odinger equation. The proof is based on a decomposition argument without time localization, which has recently been introduced by the second author.
\end{abstract}
\maketitle
\section{Introduction}
Let $d\ge1$. We consider the fractional Schr\"odinger equation on $\mathbb R^d\times \mathbb R$
\[
\begin{cases}
	i\partial_t u + (-\Delta)^{\frac{m}2} u = 0, &\\
	u(\cdot,0) = f \in H^{s}(\mathbb R^d),  &	
\end{cases}
\]
with the initial data $f\in H^s(\mathbb R^d)$ and $m>0$. Here, $H^s(\mathbb R^d)$ denotes the Sobolev space of order $s$ whose norm is given by
\[
\|f\|_{H^s(\mathbb{R}^d)}=\|(1-\Delta)^\frac s2f\|_{L^2(\mathbb{R}^d)}. 
\]  
Then, the solution can be formally written as 
\[
u(x,t) = e^{it(-\Delta)^{\frac m2}}f(x) =\left( \frac1{2\pi}\right)^d \int_{\mathbb R^d} e^{i(x\cdot \xi + t|\xi|^m)} \widehat{f}(\xi)\,\d\xi,
\]
where $\widehat \cdot$ is the Fourier transform defined by $\widehat{f}(\xi)=\int_{\mathbb R^d} e^{-ix\cdot\xi} f(x)\,\d x$. While the classical and standard Schr\"odinger equation when $m=2$ has drawn attention in numerous fields, the fractional Sch\"odinger equation with general $m$ has also been found to be influential in recent years. In fact, it is not only a model case of a general dispersive equation \cite{CS88, KPV91} but also one of the fundamental equations in quantum mechanics \cite{Ls00, Ls02}. Since then, it has become an important subject studied by a number of authors from a variety of perspectives, see for example \cite{CKS16, CL14, GH11, GLNY18, GY14, HS15, IP14, Ke12, Ps09}. 

A fundamental question is the following pointwise convergence problem of determining the minimal exponent $s$ for which 
\begin{equation}\label{conv}
\lim_{t\to0} e^{it(-\Delta)^{\frac m2}}f(\gamma(x,t))=f(x)\quad\ae 
\end{equation}
for $f\in H^s(\mathbb R^d)$.
Here, $\gamma$ is a continuous function such that 
\[
\gamma: \mathbb R^d \times[-1,1]\to\mathbb R^d, \quad\gamma(x,0)=x.
\]
The key quality of $\gamma$ for \eqref{conv} is the way of convergence whether \textit{tangential} or \textit{non-tangential} to the hyperplane $\mathbb R^d\times\{0\}$.

The simplest example of $\gamma$ is $\gamma(x,t)=x$ and may be considered as the prototypical non-tangential case. For this $\gamma$, \eqref{conv} reduces to the seminal pointwise convergence problem, so-called Carleson's problem, originating in \cite{Cr80}. It turns out for $d=1$ and $m=2$ that \eqref{conv} holds for all $f\in H^s(\mathbb R)$ if and only if $s\ge\frac14$, due to \cite{Cr80} and Dahlberg and Kenig \cite{DK82}. Later, Sj\"olin \cite{Sj87} generalized their results and proved $s\ge\frac14$ is also necessary and sufficient even for $m>1$. In higher dimensions, $d\ge2$, the problem has attracted significant attention and been studied by many authors. When $m=2$, it has recently been understood that $s\ge\frac{d}{2(d+1)}$ is necessary for \eqref{conv} by Bourgain in \cite{Br16} and $s>\frac{d}{2(d+1)}$ is sufficient for \eqref{conv} by Du, Guth and Li \cite{DGL17} and Du and Zhang \cite{DZ18}. Ko and the first author \cite{CK18} also proved $s>\frac{d}{2(d+1)}$ is sufficient for \eqref{conv} when $m>1$ as well. The reader may also refer to in particular the work of Vega \cite{Vg88}, Lee \cite{Lee06}, Bourgain \cite{Br13}, and Du, Guth, Li and Zhang \cite{DGLZ18} as papers which have played an important role in earlier developments.

In the study of pointwise convergence problem for the Schr\"odinger equation with harmonic oscillator potential, Lee and Rogers \cite{LeeR12} 
showed that any $\gamma\in C^1(\mathbb R^d\times[-1,1]\to\mathbb R^d)$, such as $\gamma(x,t)=x-(t^\kappa,0,\cdots,0)$ with $\kappa\ge1$, is essentially equivalent to the vertical line in the context of pointwise convergence problem of $\eqref{conv}$.\\

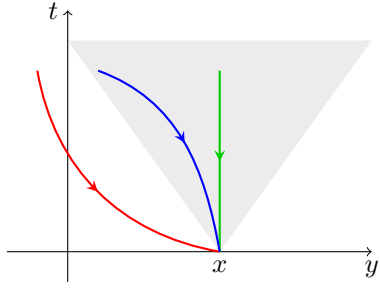
\begin{figure}[t]
\begin{center}
\begin{tikzpicture}[scale=0.8]

\tikzset{->-/.style={decoration={
  markings,
  mark=at position .5 with {\arrow{stealth}}},postaction={decorate}}}

  \fill [fill=lightgray!30!] (2.5,0)--(5,3.5)--(0,3.5)--(2.5,0);    

 \draw [->](-1,0)--(5,0);
 \draw [->](0,-0.5)--(0,4);

 \draw [color=red,thick,->-] (-0.5,3) to [out =-80, in=170] (2.5,0) ;
 \draw [color=green!80!black!, thick,->-](2.5,3)--(2.5,0);
  \draw [color=blue,thick, ->-] (0.5,3) to [out =-20, in=100] (2.5,0);
  
 \node [below]at (2.5,0){$x$};
 \node [below]at (5,0){$y$};
 \node [left]at (0,4){$t$}; 
 
\end{tikzpicture}
\caption{Three typical paths of convergence} \label{fig:curves}
\end{center}
\end{figure}

In contrast to the non-tangential case above, here we consider the tangential case. 
The curve $\gamma$ is said to satisfy H\"older condition of order $\kappa\in(0,1]$ in $t$ if
\begin{equation}\label{cond:Holder}
|\gamma(x,t)-\gamma(x,t')|\le C_1|t-t'|^{\kappa},\quad x\in\mathbb R^d, \quad t, t'\in[-1,1]
\end{equation}
and be bilipschitz in $x$ if
\begin{equation}\label{cond:bilipschitz}
\frac{1}{C_2}|x-x'|\le|\gamma(x,t)-\gamma(x',t)|\le C_2|x-x'|,\quad t\in[-1,1],\quad x, x'\in\mathbb R^d
\end{equation} 
for some $C_1, C_2 > 0$.
Then, we denote by $\Gamma(d,\kappa)$ the collection of such curves, namely,
\[
\Gamma(d,\kappa)=\{\gamma: \mathbb R^d\times[-1,1]\to\mathbb R^d:\text{$\gamma$ satisfies $\gamma(x,0)=x$, \eqref{cond:Holder} and \eqref{cond:bilipschitz}}\}.
\]
Note that $\Gamma(d,\kappa)$ contains $\gamma(x,t)=x-(t^\kappa,0,\cdots,0)$ with $0\le\kappa\le 1$. (See also Figure \ref{fig:curves}.) As a tangential case for $\gamma\in\Gamma(1,\kappa)$, Lee, Vargas and the first author \cite{CLV12} observed the crucial difference in nature from the non-tangential case; $s>\max\{\frac14,\frac{1-2\kappa}{2}\}$ is the sharp sufficient condition for \eqref{conv}.

We further consider a refinement of this question: quantifying  the sets on which the convergence \eqref{conv} fails in more precise way than Lebesgue measure. Let $0<\alpha\le d$. A positive Borel measure $\mu$ is said to be $\alpha$-dimensional if there exists a  constant $c$ such that
\begin{equation}\label{alpha-dimensional}
\mu(\mathbb B(x,r))\le c r^\alpha,
\end{equation}
where $\mathbb B(x,r)$ is the ball centered at $x\in\mathbb R^d$ with radius $r>0$. 
For $f \in H^{s}(\mathbb R^d)$ and $\gamma \in \Gamma(d,\kappa)$, let us define the \textit{divergence set} by 
\[
\mathfrak{D}(\gamma,f)=\{x\in \mathbb R^d:e^{it(-\Delta)^{m/2}}f(\gamma(x,t))\not\to f(x)\ {\rm as}\ t\to0\},
\]
and for a subset $X$ in $\mathbb R^d$, we also define the \textit{capacitary dimension} by
\[
\dim_c(X)=\sup\{\alpha: \mathcal M^\alpha(X)\not=\emptyset\},
\]
where $\mathcal M^\alpha(X)=\{\mu:\text{$\mu$ is $\alpha$-dimensional and $0<\mu(X)<\infty$}\}$. By the forthcoming Frostman's lemma, note that for a Borel set $X$ there exists $\mu\in\mathcal M^\alpha(X)$ if and only if the Hausdorff dimension of $X$ is greater than or equal to $\alpha$. In this case, the capacitary dimension of $X$ coincides with the Hausdorff dimension (see also \cite{Mt95}). Sj\"ogren and Sj\"olin \cite{SS89} considered this refined version of Carleson's problem for $\gamma(x,t)=x$ and $m\ge2$. Later, Barcel\'o, Bennett, Carbery and Rogers \cite{BBCR11} extended their result to $m>1$ and also obtained the sharp bound of the Hausdorff dimension of the divergence set by combining with work of \v Zubrini\'c \cite{Zb02}. In higher dimension, there are still some gaps remaining for which we refer the readers to contributions by Bennett and Rogers \cite{BR12}, Luc\`{a} and Rogers \cite{LR17, LR19b, LR19}, and aforementioned papers \cite{DGL17, DZ18, DGLZ18}. For $\gamma\in \Gamma(d,\kappa)$ and $m=2$, Lee and the first author obtained the capacitary dimension of the divergence set in \cite{CL14} as a refinement of aforementioned result by them with Vargas\footnote{
The result due to \cite{CLV12} coincides with the case $\alpha=1$ in \cite{CL14}. Consequently, 
\[
|\mathfrak{D}(\gamma,f)| = 0
\] for all $f \in H^{s}(\mathbb R)$ with $s>\max\left\{\frac14,\frac12-\kappa\right\}$, where $|\cdot|$ is the Lebesgue measure defined in $\mathbb R$. 
}.\\

Let $d=1$ and write $\Gamma(\kappa)=\Gamma(1,\kappa)$ in the rest of the paper. Our goal of the present note is to estimate the capacitary dimension of divergence sets for the pointwise convergence to the fractional Schr\"odinger equation along the curve $\gamma\in\Gamma(\kappa)$. Let us define evolution operator $S_t$ on appropriate input functions by
\[
S_tf(x)=\frac{1}{2\pi}\int_{\mathbb{R}}e^{i(x\xi+t|\xi|^m)}\widehat{f}(\xi)\,\d\xi.
\]
Our main results are the following.

\begin{theorem}\label{pointwise convergence with mu}
Let $m>1$, $0<\kappa\le1$, $\mu$ be an $\alpha$-dimensional measure and $\gamma\in\Gamma(\kappa)$. If $s>\max\left\{\frac14, \frac{1-\alpha}{2}, \frac{1-m\alpha\kappa}{2} \right\}$, then
\[
\lim_{t\to0}S_t(f(\gamma(x,t)))=f(x),\quad\text{$\mu$-$\ae\ x$}
\]
for all $f\in H^s(\mathbb{R})$.
\end{theorem}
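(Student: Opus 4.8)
The plan is to reduce the pointwise convergence statement to a weighted (local, frequency-localized) maximal estimate for the operator $f\mapsto \sup_{0<t<1}|S_t(f(\gamma(\cdot,t)))|$, measured against an $\alpha$-dimensional measure $\mu$. By the standard Nikishin--Stein linearization and a density argument (Schwartz functions are dense in $H^s$, and convergence is obvious for them since $\gamma(x,0)=x$), it suffices to prove that for $s>\max\{\tfrac14,\tfrac{1-\alpha}{2},\tfrac{1-m\alpha\kappa}{2}\}$ one has
\[
\Big\| \sup_{0<t<1}\big| S_t(P_\lambda f(\gamma(\cdot,t)))\big| \Big\|_{L^2(\d\mu)} \lesssim \lambda^{s}\,\|f\|_{L^2(\mathbb R)}
\]
for each Littlewood--Paley piece $P_\lambda f$ at frequency $\sim\lambda$, with a summable-in-$\lambda$ gain once $s$ exceeds the stated threshold (after the usual $\varepsilon$-removal / dyadic summation). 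Here I would localize $x$ to a ball of radius $\sim 1$ (or $\sim\lambda^{-\text{something}}$, matching the natural scaling of the extension operator) and split the $t$-integral into the dyadic ranges $t\sim 2^{-j}$; the bilipschitz hypothesis \eqref{cond:bilipschitz} lets one treat the composition $f\mapsto f(\gamma(\cdot,t))$ as essentially a change of variables with bounded Jacobian, while the Hölder hypothesis \eqref{cond:Holder} controls how far $\gamma(x,t)$ moves from $x$, namely by $O(t^\kappa)$.

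The core of the argument is the decomposition argument without time localization of the second author: instead of freezing $t\sim 2^{-j}$ and summing the resulting pieces (which loses a logarithm or forces an unfavorable trade-off), one decomposes the physical/frequency space directly and exploits that on the tangential curve the relevant phase $x\xi + t|\xi|^m$, with $|\xi|\sim\lambda$ and the displacement $|\gamma(x,t)-x|\lesssim t^\kappa$, behaves like a perturbation of the fixed-time operator. I would run the $TT^*$ machinery on $S_t$: the kernel of $S_tS_{t'}^*$ has the oscillatory form $\int e^{i((x-y)\xi+(t-t')|\xi|^m)}a(\xi)\,\d\xi$, and stationary phase / van der Corput gives decay $\lesssim (1+\lambda^{m-1}|t-t'|)^{-1/2}$ in the relevant regime, i.e. the classical $L^4$–type dispersive bound in one dimension that underlies the $s\ge\tfrac14$ Carleson threshold. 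Against the $\alpha$-dimensional measure one then uses \eqref{alpha-dimensional} to estimate $\mu(\mathbb B(x,r))\le c r^\alpha$: the displacement bound $|\gamma(x,t)-x|\lesssim t^\kappa$ means the curve at time $t$ stays in a $t^\kappa$-neighborhood, and pairing this with the ball condition produces the two competing gains $\alpha$ (from the $x$-variable at fixed time, giving the $\tfrac{1-\alpha}{2}$ term) and $m\alpha\kappa$ (from the interplay of the dispersive decay rate $\lambda^{m-1}$, the scale $t^\kappa$, and the measure, giving the $\tfrac{1-m\alpha\kappa}{2}$ term). Combining with the universal $\tfrac14$ from the fixed-time $L^4$ estimate yields the stated maximum.

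Concretely the steps are: (i) Littlewood--Paley decompose and reduce to a single frequency scale $\lambda$; (ii) localize $x$ and replace $f\circ\gamma(\cdot,t)$ by a genuine translate using \eqref{cond:bilipschitz}, tracking the error via \eqref{cond:Holder}; (iii) apply the no-time-localization decomposition to write $\sup_t |S_t g|$ as a sum of pieces indexed by dyadic separations, each amenable to a fixed bilinear/$TT^*$ estimate; (iv) insert the dispersive decay $(1+\lambda^{m-1}|t-t'|)^{-1/2}$ and the ball condition \eqref{alpha-dimensional}, optimizing the resulting geometric series in the dyadic parameters; (v) sum over $\lambda$ using the $\varepsilon$ of room in $s>\max\{\cdots\}$, and feed the outcome back through Nikishin--Stein to deduce $\mu$-a.e.\ convergence. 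The main obstacle I anticipate is step (iii)–(iv): making the no-time-localization decomposition interact cleanly with a general bilipschitz-and-Hölder curve $\gamma$ (as opposed to the model $\gamma(x,t)=x-(t^\kappa,0)$) — one must verify that the curvature/nondegeneracy used in the stationary-phase estimate survives the composition, and that the $t^\kappa$ displacement is uniform in $x$ so that the measure estimate can be applied pointwise. A secondary technical point is handling the low-frequency and the $|t-t'|$-small regimes where the dispersive decay is trivial and one must instead rely directly on the $\alpha$-dimensionality of $\mu$ together with a crude $L^2$ bound.
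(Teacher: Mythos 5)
Your reduction of Theorem \ref{pointwise convergence with mu} to a frequency-localized $L^2(\d\mu)$ maximal estimate is sound and matches the paper's own route (density of nice initial data, Chebyshev against Theorem \ref{thm:maximal estimate with mu}, translation to cover all unit intervals). The gaps are in your outline of the maximal estimate itself, which is where the theorem actually lives.

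First, step (ii): you cannot ``replace $f\circ\gamma(\cdot,t)$ by a genuine translate''; the bilipschitz hypothesis \eqref{cond:bilipschitz} does not make $\gamma(\cdot,t)$ a translation, and in the weighted setting a change of variables does not interact well with the $\alpha$-dimensional measure. The paper instead keeps $\gamma(x,t)$ inside the phase of the $TT^*$ kernel and uses \eqref{cond:bilipschitz} only as the lower bound $|\gamma(x,t)-\gamma(x',t')|\gtrsim|x-x'|$ in the regime $|x-x'|\gg|t-t'|^{\kappa}$. Second, your only quantitative input is dispersive decay in $|t-t'|$ (and the rate should be $\lambda(1+\lambda^{m}|t-t'|)^{-1/2}$ rather than $(1+\lambda^{m-1}|t-t'|)^{-1/2}$); but \eqref{alpha-dimensional} is a condition on $x$-balls, so time decay alone cannot be ``paired with the ball condition''. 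The missing idea is the decomposition of $(x,t,x',t')$-space: when $|x-x'|\lesssim|t-t'|^{\kappa}$, the H\"older bound \eqref{cond:Holder} converts the time decay into $|K_\lambda|\lesssim\lambda\bigl(\lambda^{m}|x-x'|^{1/\kappa}\bigr)^{-1/2}$, which via the HLS-type estimate of Lemma \ref{lem:Young and HLS-type} yields precisely the $\frac{1-m\alpha\kappa}{2}$ threshold; when $|x-x'|\gg|t-t'|^{\kappa}$, one needs first-derivative van der Corput (after a further split of the $\xi$-support, since the $t$-part of $\partial_\xi\phi$ can cancel the $x$-part), giving decay $(\lambda|x-x'|)^{-1}$ and hence the $\frac14$ and $\frac{1-\alpha}{2}$ thresholds; and the small-separation regime $|x-x'|\lesssim\lambda^{-2s_*/\alpha}$ is handled by the trivial kernel bound together with the Young-type part of Lemma \ref{lem:Young and HLS-type}. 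None of this appears in your sketch beyond the heuristic ``two competing gains''. Finally, your preliminary splitting of $t$ into dyadic ranges $t\sim2^{-j}$ contradicts your own (correct) remark that the method must avoid time localization: that localization is exactly what obstructs the range $1<m<2$, so it cannot be taken as a harmless first step.
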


By a standard argument, this is reduced to the following local maximal estimate. 

\begin{theorem}\label{thm:maximal estimate with mu}
Let $m>1$, $0<\kappa\le1$, $\mu$ be an $\alpha$-dimensional measure and $\gamma\in\Gamma(\kappa)$. If $s>\max\left\{\frac14, \frac{1-\alpha}{2}, \frac{1-m\alpha\kappa}{2} \right\}$, then there exists a constant $C$ such that
\begin{equation}\label{ineq:maximal estimate with mu}
\left(\int_{-1}^1\sup_{t\in [-1,1]}|S_tf(\gamma(\cdot,t))|^2\,\d\mu(x)\right)^\frac12\le C \|f\|_{H^s}
\end{equation}
for all $f\in H^s(\mathbb{R})$.
\end{theorem}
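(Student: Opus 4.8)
The plan is to prove the maximal estimate \eqref{ineq:maximal estimate with mu} by a frequency-localized, time-nonlocalized decomposition in the spirit of the second author's recent work, reducing matters to estimating the contribution of a single dyadic piece $\widehat f$ supported in $|\xi|\sim\lambda$ and then summing over $\lambda$ dyadic. After a standard reduction we may take $f$ with $\widehat f$ supported in $\{|\xi|\sim\lambda\}$ for $\lambda\ge1$, and it suffices to show
\[
\left(\int\sup_{t\in[-1,1]}|S_tf(\gamma(x,t))|^2\,\d\mu(x)\right)^{1/2}\le C\,\lambda^{\sigma}\,\|f\|_{L^2},
\]
with $\sigma=\max\{\tfrac14,\tfrac{1-\alpha}2,\tfrac{1-m\alpha\kappa}2\}+\varepsilon$ for any $\varepsilon>0$; since $\alpha$-dimensional measures satisfy \eqref{alpha-dimensional}, square-summing over dyadic $\lambda$ then yields the $H^s$ bound. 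The three terms in the max correspond to three distinct regimes, and the heart of the argument is to handle the genuinely tangential contribution, which is where the exponents $\tfrac{1-\alpha}2$ and $\tfrac{1-m\alpha\kappa}2$ come from.

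First I would linearize the supremum: replace $\sup_{t}$ by $t=t(x)$ a measurable function, so the object becomes $\|S_{t(x)}f(\gamma(x,t(x)))\|_{L^2(\d\mu)}$, and dualize, writing it as $\langle f, T^*g\rangle$ for a test function $g$ with $\|g\|_{L^2(\d\mu)}\le1$; then $\|S_{t(x)}f(\gamma(x,t(x)))\|_{L^2(\d\mu)}^2 \le \|f\|_{L^2}\,\|T^*g\|_{L^2}$, where $T^*g(\xi)=\int e^{-i(\gamma(x,t(x))\xi+t(x)|\xi|^m)}\,\overline{\psi(\xi/\lambda)}\,g(x)\,\d\mu(x)$ for a suitable cutoff $\psi$. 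Expanding $\|T^*g\|_{L^2(|\xi|\sim\lambda)}^2$ gives a double integral over $x,x'$ against the oscillatory kernel
\[
K(x,x')=\int_{|\xi|\sim\lambda}e^{i\big((\gamma(x,t(x))-\gamma(x',t(x')))\xi+(t(x)-t(x'))|\xi|^m\big)}\,|\psi(\xi/\lambda)|^2\,\d\xi,
\]
and the task is to bound $\iint |K(x,x')|\,\d\mu(x)\,\d\mu(x')$ by $C\lambda^{2\sigma}$. Stationary phase / van der Corput in $\xi$, using $m>1$ so that $\partial_\xi^2(|\xi|^m)\sim\lambda^{m-2}$ on $|\xi|\sim\lambda$, produces a decay factor; one splits into the region where $|t(x)-t(x')|$ is small (so that the $\xi$-integral behaves like $\lambda$ times a piece controlled by the bilipschitz condition \eqref{cond:bilipschitz} and the $\alpha$-dimensionality, giving the $\tfrac{1-\alpha}2$ term) and the region where $|t(x)-t(x')|$ is comparable to the full time interval but the spatial separation is forced, by the H\"older condition \eqref{cond:Holder}, to be at most $\sim|t(x)-t(x')|^\kappa$; the interplay of the $\lambda^m$ phase and this $\kappa$-H\"older constraint is exactly what yields the $\tfrac{1-m\alpha\kappa}2$ term, while the $\tfrac14$ comes from the classical one-dimensional $L^2$ local smoothing / $L^4$ argument in the remaining non-tangential regime. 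Here the key point, which is the novelty of the method, is that one does \emph{not} localize $t$ to small intervals before decomposing; instead the decomposition is done purely in the frequency/spatial variables and the full range $t\in[-1,1]$ is carried along, which avoids the loss that a naive time-localization + summation would incur.

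The main obstacle I expect is the careful bookkeeping in the tangential regime: controlling $\iint|K(x,x')|\,\d\mu\,\d\mu$ uniformly over the measurable choice $t(\cdot)$, since one cannot assume any regularity of $x\mapsto t(x)$, so all estimates must be pointwise in $(x,x')$ and then integrated against the two $\alpha$-dimensional measures. Concretely, after the van der Corput bound one is left with an integral of the form $\iint \min\{\lambda,\ |\gamma(x,t(x))-\gamma(x',t(x'))|^{-1}\lambda^{(2-m)/2}\cdot(\text{stuff})\}\,\d\mu(x)\,\d\mu(x')$, and one must dyadically decompose the spatial gap $|x-x'|\sim 2^{-j}$ and the time gap $|t(x)-t(x')|\sim 2^{-k}$, use \eqref{cond:bilipschitz} to pass between $|\gamma-\gamma|$ and $|x-x'|$, use \eqref{cond:Holder} to obtain the constraint $2^{-j}\lesssim 2^{-k\kappa}$ (in the tangential sub-case), apply \eqref{alpha-dimensional} to each slab to bound $\mu(\{x':|x-x'|\sim 2^{-j}\})\lesssim 2^{-j\alpha}$, and then sum the resulting geometric series in $j$ and $k$; the threshold for convergence of these sums is precisely where the three exponents $\tfrac14$, $\tfrac{1-\alpha}2$, $\tfrac{1-m\alpha\kappa}2$ appear, and checking that each sum is summable (rather than merely bounded with a logarithmic loss, which is absorbed by the $\varepsilon$) is the delicate part. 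A secondary technical point is the passage from the global estimate to the local one and handling the low-frequency part $|\xi|\lesssim 1$ separately, but this is routine given the Sobolev embedding on the time interval and the finiteness of $\mu$ on bounded sets.
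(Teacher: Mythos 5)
Your proposal follows essentially the same route as the paper's proof: Littlewood--Paley reduction to $\widehat f$ supported in $|\xi|\sim\lambda$, a $TT^*$/duality argument with no time localization, van der Corput bounds on the oscillatory kernel that distinguish a near-diagonal regime, a non-tangential regime (where \eqref{cond:Holder} and \eqref{cond:bilipschitz} give $|\gamma(x,t)-\gamma(x',t')|\gtrsim|x-x'|$ and the first/second derivatives of the phase are played off against each other) and a tangential regime (where $|t-t'|\gtrsim|x-x'|^{1/\kappa}$ feeds the $\lambda^m$ second derivative and produces the $\frac{1-m\alpha\kappa}{2}$ exponent), followed by integrating the kernel bounds against the $\alpha$-dimensional measure via dyadic decomposition in $|x-x'|$ (the content of Lemma \ref{lem:Young and HLS-type}); the only cosmetic differences are that you linearize the supremum with a measurable $t(x)$ whereas the paper dualizes $L^2_x(\d\mu)L^\infty_t$ against $L^2_x(\d\mu)L^1_t$, and you decompose dyadically in both $|x-x'|$ and $|t-t'|$ where the paper simply uses the three regions $V_1,V_2,V_3$. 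One phrasing should be corrected: the target is not $\iint|K(x,x')|\,\d\mu(x)\d\mu(x')\lesssim\lambda^{2\sigma}$ (which does not control the $L^2(\d\mu)$ operator norm) but the corresponding Schur-type/bilinear bound keeping the test functions, which is exactly what your slab-by-slab estimate $\mu(\{x':|x-x'|\sim 2^{-j}\})\lesssim 2^{-j\alpha}$ plus geometric summation delivers, as in Lemma \ref{lem:Young and HLS-type}.
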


It is straightforward to obtain some results of the above type for $m\not=2$ (more specifically $m\ge2$) by appropriately modifying the argument in \cite{CLV12} or \cite{DN19}, however, as the second author observed in his study of a related problem in a different setting \cite{Sh19}, there are some barriers with such an approach to treat $m$ near $1$. In particular, building on the ideas in \cite{Sh19} in which we completely avoid time localization techniques, we are able to handle the full range of $m>1$ and give us the sharp sufficient conditions. Here, saying sharp is meant by that: Suppose $s<\max\left\{\frac14, \frac{1-\alpha}{2}, \frac{1-m\alpha\kappa}{2} \right\}$, then there exists $\gamma\in\Gamma(\kappa)$, $\alpha$-dimensional $\mu$ and $f\in H^s(\mathbb R)$ such that \eqref{pointwise convergence with mu} fails.
Since the counterexamples can be provided by adjusting the corresponding well-known constructions (for instance, \cite{CLV12} and \cite{Sj87}) without any major difficulty, we rather focus on the sufficient conditions.
As corollaries of Theorem \ref{pointwise convergence with mu}, we have the following.

\begin{corollary}\label{divergence set estimate}
Let $m>1$, $0<\kappa\le1$, $\gamma\in\Gamma(\kappa)$. If $s>\frac14$, then
\[
\dim_c(\mathfrak{D}(\gamma,f))\le\max\left\{1-2s,\frac{1-2s}{m\kappa}\right\}.
\]
\end{corollary}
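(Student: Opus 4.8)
The plan is to derive Corollary~\ref{divergence set estimate} from Theorem~\ref{pointwise convergence with mu} via a standard contrapositive argument using Frostman's lemma. Suppose $s > \frac14$ and set $\alpha_0 = \max\left\{1-2s, \frac{1-2s}{m\kappa}\right\}$; we must show $\dim_c(\mathfrak D(\gamma,f)) \le \alpha_0$ for every $f \in H^s(\mathbb R)$. Equivalently, it suffices to show that for every $\alpha > \alpha_0$ the set $\mathfrak D(\gamma,f)$ carries no $\alpha$-dimensional measure $\mu$ with $0 < \mu(\mathfrak D(\gamma,f)) < \infty$.

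The key observation is that $\alpha > \alpha_0$ is exactly the arithmetic condition under which the hypothesis of Theorem~\ref{pointwise convergence with mu} is satisfied: unravelling $\alpha > 1-2s$ gives $s > \frac{1-\alpha}{2}$, and $\alpha > \frac{1-2s}{m\kappa}$ gives $s > \frac{1-m\alpha\kappa}{2}$, while $s > \frac14$ is assumed outright; hence $s > \max\left\{\frac14, \frac{1-\alpha}{2}, \frac{1-m\alpha\kappa}{2}\right\}$. Thus, if such a $\mu$ existed, Theorem~\ref{pointwise convergence with mu} would yield $\lim_{t\to 0} S_t(f(\gamma(x,t))) = f(x)$ for $\mu$-a.e.\ $x$, i.e.\ $\mu(\mathfrak D(\gamma,f)) = 0$, contradicting $\mu(\mathfrak D(\gamma,f)) > 0$. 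Therefore $\mathcal M^\alpha(\mathfrak D(\gamma,f)) = \emptyset$ for all $\alpha > \alpha_0$, which by definition of $\dim_c$ gives $\dim_c(\mathfrak D(\gamma,f)) \le \alpha_0$.

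One technical point to address carefully is the passage between the operator $S_t$ applied to the full initial data and the pointwise convergence statement~\eqref{conv}: one should note that $S_t f$ agrees with $e^{it(-\Delta)^{m/2}}f$ up to the harmless normalization already recorded in the excerpt, so that the divergence set defined with $e^{it(-\Delta)^{m/2}}$ is the same as the one controlled by Theorem~\ref{pointwise convergence with mu}. Also, when $\alpha_0 \le 0$ (which happens for $s \ge \frac12$), the statement $\dim_c \le \alpha_0$ should be interpreted as saying $\mathfrak D(\gamma,f)$ is empty or, more precisely, that it supports no positive-dimensional measure; since $\dim_c$ is defined as a supremum over $\alpha > 0$ with $\mathcal M^\alpha \ne \emptyset$, and the bound $\alpha < \alpha_0 \le 0$ is vacuous, the conclusion degenerates appropriately and in fact one recovers $|\mathfrak D(\gamma,f)| = 0$ as in the footnote.

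I do not expect any genuine obstacle here: the corollary is a formal consequence of the theorem, and the only thing to get right is the elementary algebra translating the thresholds in $s$ into thresholds in $\alpha$, together with a careful invocation of Frostman's lemma to guarantee that a set of Hausdorff (equivalently capacitary) dimension exceeding $\alpha_0$ would indeed support an $\alpha$-dimensional measure of finite positive mass for some $\alpha > \alpha_0$. The argument is essentially identical to the one relating \cite{CLV12} and \cite{CL14} in the case $m=2$, now with the extra threshold $\frac{1-2s}{m\kappa}$ coming from the fractional dispersion.
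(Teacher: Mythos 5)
Your proposal is correct and follows essentially the same route as the paper: assume the dimension bound fails, pick $\alpha$ strictly between $\max\{1-2s,\frac{1-2s}{m\kappa}\}$ and $\dim_c(\mathfrak D(\gamma,f))$, note the algebraic equivalence of the thresholds so that Theorem~\ref{pointwise convergence with mu} applies, and derive the contradiction $\mu(\mathfrak D(\gamma,f))=0$ against the existence of an $\alpha$-dimensional measure giving $\mathfrak D(\gamma,f)$ positive mass (the paper phrases this last step via Frostman's lemma, you via the definition of $\mathcal M^\alpha$, which is an immaterial difference).
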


The special case when $\mu$ is the ($1$-dimensional) Lebesgue measure extends the result in \cite{CLV12} from $m=2$ to $m>1$ as follows. Here, note that the required regularity on $s$ for \eqref{conv} depends not only on $\kappa$ but $m$ as well.

\begin{corollary}\label{pointwise convergence without mu}
Let $m>1$, $0<\kappa\le1$ and $\gamma\in\Gamma(\kappa)$.  If $s>\max\{\frac14,\frac{1-m\kappa}{2}\}$, then
\begin{equation}\label{ineq:maximal estimate without mu}
\left(\int_{-1}^1\sup_{t\in [-1,1]}|S_tf(\gamma(\cdot,t))|^2\,\d x\right)^\frac12\le C \|f\|_{H^s}
\end{equation}
holds for all $f\in H^s(\mathbb{R})$.
\end{corollary}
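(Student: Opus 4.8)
The plan is to obtain Corollary \ref{pointwise convergence without mu} as the special case $\alpha=1$ of Theorem \ref{thm:maximal estimate with mu}. First I would check that the one-dimensional Lebesgue measure (or, if one prefers, its restriction to $[-1,1]$) is a $1$-dimensional measure in the sense of \eqref{alpha-dimensional}: since $|\mathbb B(x,r)|=2r$ for every $x\in\mathbb R$ and $r>0$, the bound $\mu(\mathbb B(x,r))\le c\,r^\alpha$ holds with $\alpha=1$ and $c=2$. Thus Lebesgue measure is admissible as the measure $\mu$ in Theorem \ref{thm:maximal estimate with mu}.

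Next, I would observe that the regularity threshold in Theorem \ref{thm:maximal estimate with mu} specializes at $\alpha=1$ to
\[
\max\left\{\tfrac14,\ \tfrac{1-\alpha}{2},\ \tfrac{1-m\alpha\kappa}{2}\right\}\Big|_{\alpha=1}=\max\left\{\tfrac14,\ 0,\ \tfrac{1-m\kappa}{2}\right\}=\max\left\{\tfrac14,\ \tfrac{1-m\kappa}{2}\right\},
\]
the last equality because $\tfrac14>0$. Hence, whenever $s>\max\{\tfrac14,\tfrac{1-m\kappa}{2}\}$, Theorem \ref{thm:maximal estimate with mu} applied with $\mu$ equal to Lebesgue measure yields \eqref{ineq:maximal estimate with mu} with $\d\mu(x)$ replaced by $\d x$, which is precisely \eqref{ineq:maximal estimate without mu}. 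No genuine obstacle arises here; the corollary is a direct reading-off of the main theorem once one recognizes Lebesgue measure as a $1$-dimensional measure, so the work has already been done in proving Theorem \ref{thm:maximal estimate with mu}.
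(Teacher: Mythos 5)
Your proposal is correct and matches the paper's own treatment: the paper obtains Corollary \ref{pointwise convergence without mu} precisely by taking $\mu$ to be the ($1$-dimensional) Lebesgue measure, i.e.\ the case $\alpha=1$ of Theorem \ref{thm:maximal estimate with mu}, where the threshold collapses to $\max\{\frac14,\frac{1-m\kappa}{2}\}$ exactly as you compute. Your check that Lebesgue measure satisfies \eqref{alpha-dimensional} with $\alpha=1$ is the only verification needed, so there is nothing to add.
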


Ding and Niu have also considered \eqref{ineq:maximal estimate without mu} for $m\ge2$ in \cite{DN19} and claim that the sharp sufficient condition is $s>\max\{\frac14,\frac{1-2\kappa}{2}\}$ (in particular, their condition is independent of $m$). Unfortunately, it appears that the arguments in \cite{DN19} are not complete and the sharp regularity threshold is  $\max\{\frac14,\frac{1-m\kappa}{2}\}$; we note that the necessity of $s\ge\max\{\frac14,\frac{1-m\kappa}{2}\}$ for \eqref{ineq:maximal estimate without mu} follows from Section \ref{sec:counterexamples} in the present paper by taking $\alpha=1$. 

Combining Corollary \ref{pointwise convergence without mu} with the result from \cite{Sh19}, the results in \cite{CLV12} have been completely extended from the standard Schr\"odinger equation to the fractional Schr\"odinger equation with\footnote{
One can also consider the fractional Schr\"odinger equation with $0<m\le 1$ but the nature appears to be certainly different. For instance, see \cite{Cw82, RV08, Wl95} for $\gamma(x,t)=x$.
}
$m>1$. 

\begin{remark}
Although Theorem \ref{thm:maximal estimate with mu} is stated for the fractional Schr\"odinger evolution operator, by simply following the provided proof in Section \ref{sec:proof of maximal estimate} the same conclusion is valid for a wider class of evolution operators such as
\[
S_t^{\Phi}f(x)=\frac{1}{2\pi}\int_{\mathbb{R}}e^{i(x\xi+t\Phi(\xi))}\widehat{f}(\xi)\,\d\xi,
\]
where $\Phi:\mathbb{R}\to\mathbb{R}$ is a $C^2$-function for which there exist constants $C_3,C_4>0$ such that
\begin{equation}\label{Phi'' is bounded below}
|\xi|^{2-m}\left|\frac{\d^2}{\d\xi^2}\Phi(\xi)\right|\ge C_3
\quad\text{and}\quad
|\xi|\left|\frac{\d^2}{\d\xi^2}\Phi(\xi)\right|\ge C_4\left|\frac{\d}{\d\xi}\Phi(\xi)\right|
\end{equation}
for all $|\xi|\ge1$. This class trivially contains $|\xi|^m$ whenever $m>1$.
\end{remark}

Throughout the paper, we denote $I=[-1,1]$, $A\gtrsim B$ if $A\ge CB$, $A\lesssim B$ if $A\le CB$ and $A\sim B$ if $C^{-1}B\le A \le CB$ for some constant $C>0$. The domain of certain norms is sometimes abbreviated, for its meaning is clear from the context. In the following Section \ref{sec:preliminaries}, we present useful lemmas, then first prove Theorem \ref{pointwise convergence with mu} and Corollary \ref{divergence set estimate} in Section \ref{sec:reduction arguments}. In Section \ref{sec:proof of maximal estimate}, we prove Theorem \ref{thm:maximal estimate with mu} by employing the philosophy in \cite{Sh19} of a decomposition argument without time localization. Finally, in Section \ref{sec:counterexamples} we see the shaprness of Theorem \ref{thm:maximal estimate with mu}.

\section{Preliminaries}\label{sec:preliminaries}
In this section, as we have informed, we introduce useful lemmas which we use multiple times in the rest of the paper.
\begin{lemma}[Frostman's lemma]\label{lem:Frostman}
Let $d\ge1$ and $X$ be a Borel set in $\mathbb{R}^d$. Then, $\dim_c(X)\ge\alpha$ if and only if there exists $\alpha$-dimensional measure such that $\supp \mu\subset X$ and $0<\mu(\mathbb{R}^d)<\infty$. Further, $\mu(X)>0$.
\end{lemma}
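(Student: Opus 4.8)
The plan is to separate the two implications and to concentrate on the only step with genuine content, the classical Frostman lemma for compact sets. The ``if'' direction and the final clause are immediate: if $\mu$ is $\alpha$-dimensional with $\supp\mu\subset X$ and $0<\mu(\mathbb R^d)<\infty$, then, since $\mu$ assigns no mass outside its support, $\mu(X)=\mu(\supp\mu)=\mu(\mathbb R^d)\in(0,\infty)$; hence $\mu\in\mathcal M^\alpha(X)$, so $\mathcal M^\alpha(X)\neq\emptyset$ and $\dim_c(X)\ge\alpha$, and at the same time $\mu(X)>0$.

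For the ``only if'' direction I would first dispatch the measure-theoretic bookkeeping. Two elementary facts are needed: (i) a $\beta$-dimensional measure of finite total mass is automatically $\gamma$-dimensional for every $0<\gamma\le\beta$ (use $r^\beta\le r^\gamma$ for $r\le1$ and finiteness of the total mass for $r\ge1$), so $\{\beta:\mathcal M^\beta(X)\neq\emptyset\}$ is an interval whose supremum is $\dim_c(X)$; and (ii) the mass distribution principle, namely that $\mu(\mathbb B(x,r))\le cr^\beta$ together with $\mu(X)>0$ forces $\mathcal H^\beta(X)\ge c^{-1}\mu(X)>0$. Granting the core step below, (i) and (ii) give $\dim_c(X)=\dim_{\mathcal H}(X)$, so the hypothesis $\dim_c(X)\ge\alpha$ yields $\mathcal H^\beta(X)>0$ for every $\beta<\alpha$; by the standard inner regularity of Hausdorff measures on Borel sets (cf.\ \cite{Mt95}) there is then, for each such $\beta$, a compact $K\subset X$ with $\mathcal H^\beta_\infty(K)>0$. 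Applying the core step to $K$ produces a $\beta$-dimensional measure $\mu$ with $\supp\mu\subset K\subset X$ and $0<\mu(\mathbb R^d)<\infty$; taking $\beta\ge\alpha$ (possible precisely in the case that is actually used, $\dim_c(X)>\alpha$) and invoking (i) makes $\mu$ the desired $\alpha$-dimensional measure.

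The core step is the classical construction. One reduces to a compact $K$ inside the unit cube with $\mathcal H^\beta_\infty(K)>0$, and for each generation $k$ builds a measure $\mu_k$ by dyadic mass redistribution: start from the measure placing mass $\ell(Q)^\beta$ (a constant multiple of Lebesgue measure, with $\ell(Q)$ the side length) on every generation-$k$ dyadic cube $Q$ meeting $K$, then sweep upward through generations $j=k,k-1,\dots,1$, rescaling the restriction of the current measure to each generation-$(j{-}1)$ cube $Q$ by the factor $\min\{1,\ell(Q)^\beta/m\}$, where $m$ is the current mass of $Q$. Then $\mu_k(Q)\le\ell(Q)^\beta$ for every dyadic cube $Q$ of generation at most $k$, while along the chain of dyadic cubes containing a given point of $K$ there is a smallest one on which equality still holds; collecting the maximal such cubes gives a disjoint cover $\{Q_i\}$ of $K$, whence $\mu_k(\mathbb R^d)\ge\sum_i\mu_k(Q_i)=\sum_i\ell(Q_i)^\beta\gtrsim\mathcal H^\beta_\infty(K)>0$, and trivially $\mu_k(\mathbb R^d)\le1$. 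A weak-$*$ subsequential limit $\mu$ of $(\mu_k)$ is supported in $K$ (the $\mu_k$ live in shrinking neighbourhoods of $K$), has $0<\mu(\mathbb R^d)<\infty$, and, on covering $\mathbb B(x,r)$ by $O_d(1)$ dyadic cubes of side $\sim r$ and transferring the dyadic bound through the limit, satisfies $\mu(\mathbb B(x,r))\le C_d r^\beta$.

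The main obstacle I anticipate is precisely this core construction together with its two limiting arguments: verifying that the upward sweep preserves the ``equality at a stopping cube'' property that underlies the lower mass bound, and passing the upper bound $\mu_k(Q)\le\ell(Q)^\beta$ to the weak-$*$ limit, which requires the customary care about closed cubes versus slightly enlarged open ones. The appeal to inner regularity of $\mathcal H^\beta$ is the other point where one leans on a nontrivial external fact rather than arguing by hand.
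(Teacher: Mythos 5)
Your argument is essentially correct, but it takes a much longer route than the paper, which offers no proof at all and simply refers to \cite{Mt95}: what you have written out is precisely the classical Frostman construction from Mattila's book (dyadic mass redistribution on a compact set of positive $\beta$-content, weak-$*$ limit, mass distribution principle), together with the bookkeeping identifying $\dim_c$ with Hausdorff dimension. Two comparative remarks. First, with the paper's definition of $\dim_c(X)$ as $\sup\{\beta:\mathcal M^\beta(X)\neq\emptyset\}$, the direction that is actually used (Section 3, where one has the strict inequality $\dim_c(X)>\alpha$) admits a much cheaper proof that avoids the dyadic construction entirely: pick $\beta>\alpha$ and $\mu\in\mathcal M^\beta(X)$, use inner regularity of the finite Borel measure $\mu$ to find a compact $K\subset X$ with $\mu(K)>0$, and restrict; $\mu|_K$ is supported in $X$, has finite positive total mass, and, being a finite $\beta$-dimensional measure with $\beta>\alpha$, is $\alpha$-dimensional up to a constant. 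Your detour through $\mathcal H^\beta(X)>0$ and the inner regularity of Hausdorff measure on Borel sets (Davies' theorem) is correct but leans on a strictly deeper external fact, and it buys you something the paper only mentions in passing, namely the identification of capacitary and Hausdorff dimension. Second, you are right to flag the endpoint: as literally stated, the ``only if'' direction with $\dim_c(X)\ge\alpha$ can fail (take $X$ Borel with $\dim_c(X)=\alpha$ but $\mathcal H^\alpha(X)=0$; an $\alpha$-dimensional measure charging $X$ would force $\mathcal H^\alpha(X)>0$), so the lemma should be read, as you do and as the paper uses it, with the strict inequality; your restriction to that case is the correct reading rather than a gap in your argument.
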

For a proof of Lemma \ref{lem:Frostman}, we refer the reader to \cite{Mt95}.
We need the following lemmas in order to prove Theorem \ref{thm:maximal estimate with mu} in Section \ref{sec:proof of maximal estimate}.

\begin{lemma}[van der Corput's lemma]\label{lem:van der Corput}
Let $-\infty<a<b<\infty$, $\phi$ be a sufficiently smooth real-valued function and $\psi$ be a bounded smooth complex-valued function. Suppose we have $|\frac{\d^k}{\d \xi^k}\phi(\xi)|\geq1$ for all $\xi \in [a,b]$. If $k=1$ and $\frac{\d}{\d \xi}\phi(\xi)$ is monotonic on $(a,b)$, or simply $k\geq2$, then there exists a constant $C_k$ such that
\[
\left|\int_a^be^{i\lambda\phi(\xi)}\psi(\xi)\,\d\xi\right|\leq C_k\lambda^{-\frac1k}\left(\int_a^b\left|\frac{\d}{\d\xi}\psi(\xi)\right|\,\d\xi+\|\psi\|_{L^{\infty}}\right)
\]
for all $\lambda > 0$.
\end{lemma}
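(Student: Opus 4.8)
The final statement to prove is van der Corput's lemma.

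The plan is to follow the classical induction on $k$. For the base case $k=1$, assume $|\phi'(\xi)| \geq 1$ with $\phi'$ monotonic on $(a,b)$. Write the integrand as
\[
\int_a^b e^{i\lambda\phi(\xi)}\psi(\xi)\,\d\xi = \int_a^b \frac{1}{i\lambda\phi'(\xi)}\frac{\d}{\d\xi}\left(e^{i\lambda\phi(\xi)}\right)\psi(\xi)\,\d\xi,
\]
and integrate by parts, moving the derivative off the exponential. This produces a boundary term bounded by $\frac{2}{\lambda}\|\psi\|_{L^\infty}$ (using $|\phi'|\geq 1$) plus an integral term involving $\frac{\d}{\d\xi}\bigl(\psi(\xi)/\phi'(\xi)\bigr)$. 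Splitting that derivative via the product rule, the piece with $\psi'/\phi'$ contributes at most $\frac{1}{\lambda}\int_a^b |\psi'|\,\d\xi$, while the piece with $\psi \cdot (1/\phi')' $ is controlled using monotonicity of $\phi'$: since $1/\phi'$ is then monotonic, $\int_a^b |(1/\phi')'|\,\d\xi = |1/\phi'(b) - 1/\phi'(a)| \leq 2$, so this term is at most $\frac{2}{\lambda}\|\psi\|_{L^\infty}$. Collecting these gives the claimed bound for $k=1$.

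For the inductive step, suppose the estimate holds for $k-1$ (with the understanding that no monotonicity hypothesis is needed once the order is at least $2$), and assume $|\phi^{(k)}(\xi)| \geq 1$ on $[a,b]$ with $k \geq 2$. Then $\phi^{(k-1)}$ is monotonic, hence has at most one zero $c \in [a,b]$; if it has none, set $c$ to be an endpoint. On the region $|\xi - c| \geq \delta$ we have $|\phi^{(k-1)}(\xi)| \geq \delta$ by the mean value theorem, so on each such subinterval we apply the inductive hypothesis to $\phi/\delta$ (whose $(k-1)$-st derivative is $\geq 1$ in absolute value), picking up a factor $(\lambda\delta)^{-1/(k-1)}$ times the $\psi$-factor $\int |\psi'| + \|\psi\|_\infty$. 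On the complementary region $|\xi-c| < \delta$, we estimate trivially by $2\delta \|\psi\|_{L^\infty}$. Optimizing in $\delta$ — choosing $\delta \sim \lambda^{-1/k}$ — balances the two contributions and yields the bound $C_k \lambda^{-1/k}\bigl(\int_a^b |\psi'|\,\d\xi + \|\psi\|_{L^\infty}\bigr)$, completing the induction.

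The main technical point to handle with care is the bookkeeping of the $\psi$-dependent factor through the induction: one must verify that splitting $[a,b]$ into a bounded number of subintervals (at most three: two away from $c$ and one near $c$) does not blow up the $\int |\psi'|\,\d\xi$ term, which is fine since these integrals over disjoint subintervals sum to at most the integral over $[a,b]$, and that the sup norm is likewise unaffected. A secondary subtlety is the monotonicity requirement in the $k=1$ case, which is exactly what makes $1/\phi'$ of bounded variation and is genuinely needed — without it the lemma is false — so it is important that the inductive step only ever invokes the $k=1$ case in situations where $\phi^{(k-1)}$ is monotonic by construction. I do not expect any serious obstacle; this is the standard proof, and the only care needed is in tracking constants $C_k$ through the recursion.
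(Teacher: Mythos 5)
Your proof is correct. Note that the paper does not prove this lemma at all; it simply cites Stein's \emph{Harmonic Analysis} \cite{St94}, and your argument is essentially the classical proof found there: integration by parts with the monotonicity of $\phi'$ (hence bounded variation of $1/\phi'$) for $k=1$, then induction on $k$ by isolating the at most one zero of the monotone function $\phi^{(k-1)}$, applying the inductive hypothesis off a $\delta$-neighborhood, estimating trivially near it, and optimizing $\delta\sim\lambda^{-1/k}$. The one small deviation from the textbook route is that you carry the amplitude $\psi$ through the induction, whereas Stein first proves the $\psi\equiv1$ case by induction and then inserts a general amplitude by a single further integration by parts against $F(\xi)=\int_a^{\xi}e^{i\lambda\phi}$; your variant works just as well, since the $\int_a^b|\psi'|$-terms over the (at most three) disjoint subintervals sum to the global one and the $k=1$ case is only ever invoked where $\phi^{(k-1)}$ is monotone by construction, which you correctly flag as the two delicate points.
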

For a proof of Lemma \ref{lem:van der Corput}, we refer the reader to \cite{St94}.

\begin{lemma}\label{lem:Young and HLS-type}
Let $0<\alpha\le1$ and $\mu$ be an $\alpha$-dimensional measure. There exists a constant $C$ such that for any interval $[a,b]$ ($-\infty<a,b<\infty$) 
\begin{align}\label{ineq:Young special case}
&\left|\iint\iint g(x,t)h(x',t')\chi_{[a,b]}(x-x')\,\d \mu(x)\d t\d \mu(x')\d t'\right|\\
&\le C(b-a)^{\alpha}\|g\|_{L^2_x(\d\mu)L^1_t}\|h\|_{L^2_x(\d\mu)L^1_t}.\nonumber
\end{align}
Moreover, for $0<\rho<\alpha$ there exists a constant $C$ such that 
\begin{align}\label{ineq:HLS-type}
&\left|\iint\iint g(x,t)h(x',t')|x-x'|^{-\rho}\,\d \mu(x)\d t\d \mu(x')\d t'\right|\\
&\le C\|g\|_{L^2_x(\d\mu)L^1_t}\|h\|_{L^2_x(\d\mu)L^1_t}.\nonumber
\end{align}
Here, the both integrals are taken over $(x,t)$, $(x',t')\in I\times I$.
\end{lemma}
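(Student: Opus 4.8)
The plan is to treat both inequalities by the same mechanism: the double integral is a bilinear form with a translation-invariant kernel $K(x-x')$ (namely $K=\chi_{[a,b]}$ or $K(y)=|y|^{-\rho}$), and the $t,t'$ integrations are harmless because the kernel does not depend on them. First I would dispose of the time variables: by Fubini and the trivial bound $|\iint g(x,t)h(x',t')\,\d t\,\d t'|\le G(x)H(x')$ with $G(x)=\int|g(x,t)|\,\d t=\|g(x,\cdot)\|_{L^1_t}$ and likewise for $H$, the left-hand side of either \eqref{ineq:Young special case} or \eqref{ineq:HLS-type} is bounded by $\iint G(x)H(x')K(x-x')\,\d\mu(x)\,\d\mu(x')$. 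Note $\|G\|_{L^2(\d\mu)}=\|g\|_{L^2_x(\d\mu)L^1_t}$ and similarly for $H$, so it suffices to prove the corresponding bilinear estimate on $L^2(\d\mu)$.

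For \eqref{ineq:Young special case}, after the reduction it remains to show $\iint G(x)H(x')\chi_{[a,b]}(x-x')\,\d\mu(x)\,\d\mu(x')\lesssim(b-a)^\alpha\|G\|_{L^2(\d\mu)}\|H\|_{L^2(\d\mu)}$. By Cauchy--Schwarz in $\d\mu(x')$, the inner integral $\int H(x')\chi_{[a,b]}(x-x')\,\d\mu(x')$ is at most $\|H\|_{L^2(\d\mu)}\,\mu(\{x':x-x'\in[a,b]\})^{1/2}$, and the last set is an interval of length $b-a$, so by the $\alpha$-dimensional condition \eqref{alpha-dimensional} (covering an interval of length $\ell$ by $O(\ell/r+1)$ balls of radius $r$, or simply noting an interval of length $\ell$ sits in a ball of radius $\ell$) its $\mu$-measure is $\lesssim (b-a)^\alpha$. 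Then a second application of Cauchy--Schwarz in $\d\mu(x)$, together with $\mu(I)<\infty$ — here one should be slightly careful and instead estimate $\int G(x)\big(\int H(x')\chi_{[a,b]}(x-x')\,\d\mu(x')\big)\,\d\mu(x)$ by bounding the parenthesis by $(b-a)^{\alpha/2}\|H\|_{L^2(\d\mu)}$ pointwise in $x$ only after first extracting one factor, or more cleanly use the symmetric form $T(G,H)=\iint G H\chi\,\d\mu\,\d\mu$ and Schur's test with the uniform kernel bound $\sup_x\int\chi_{[a,b]}(x-x')\,\d\mu(x')\lesssim(b-a)^\alpha$ — yields the claim with Schur giving the bound $(b-a)^\alpha\|G\|_{L^2(\d\mu)}\|H\|_{L^2(\d\mu)}$ directly.

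For \eqref{ineq:HLS-type}, the same Schur-test strategy fails because $\sup_x\int|x-x'|^{-\rho}\,\d\mu(x')$ need not be finite, so instead I would decompose the kernel dyadically: $|y|^{-\rho}=\sum_{j\in\mathbb Z}|y|^{-\rho}\chi_{\{|y|\sim 2^j\}}\lesssim\sum_{j}2^{-j\rho}\chi_{\{|x-x'|\le 2^{j+1}\}}$, apply the already-proved inequality \eqref{ineq:Young special case} (with $b-a\sim 2^j$) to each piece, obtaining a bound $\sum_j 2^{-j\rho}2^{j\alpha}\|G\|_{L^2(\d\mu)}\|H\|_{L^2(\d\mu)}$ — but this sum diverges at $j\to-\infty$ since $\alpha>\rho$, so one must use that $G,H$ are supported on $I$: for $2^j\gtrsim 1$ use the crude estimate $\chi_{\{|x-x'|\le 2^{j+1}\}}\le\chi_{\{x,x'\in I\}}$ giving $\sum_{2^j\gtrsim1}2^{-j\rho}\|G\|_{L^1(\d\mu)}\|H\|_{L^1(\d\mu)}\lesssim\|G\|_{L^2(\d\mu)}\|H\|_{L^2(\d\mu)}$ by $\mu(I)<\infty$, and for $2^j\lesssim1$ use \eqref{ineq:Young special case} to get $\sum_{2^j\lesssim1}2^{-j\rho+j\alpha}\|G\|_{L^2(\d\mu)}\|H\|_{L^2(\d\mu)}$, which now converges because the exponent $\alpha-\rho>0$ multiplies $j\to-\infty$ in the geometric sum. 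Summing the two ranges gives \eqref{ineq:HLS-type}.

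The main obstacle is purely bookkeeping: making the Schur-test / Cauchy--Schwarz reduction genuinely symmetric (so that one really lands on $\|g\|_{L^2_x(\d\mu)L^1_t}\|h\|_{L^2_x(\d\mu)L^1_t}$ rather than a lopsided $L^1$–$L^2$ pairing that would cost an extra factor of $\mu(I)^{1/2}$), and, in the dyadic argument for \eqref{ineq:HLS-type}, correctly exploiting the compact support in $I$ to tame the low-frequency ($j\to-\infty$) tail where $\alpha>\rho$ is exactly what is needed for convergence. Everything else is a direct consequence of the $\alpha$-dimensional condition \eqref{alpha-dimensional} and Fubini.
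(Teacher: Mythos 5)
Your proposal is correct and takes essentially the same route as the paper: you integrate out $t,t'$ to reduce to the spatial bilinear form, prove \eqref{ineq:Young special case} by the symmetric Schur/Cauchy--Schwarz argument with the bound $\mu(\{x':x-x'\in[a,b]\})\lesssim(b-a)^{\alpha}$ (the paper phrases this as Cauchy--Schwarz on the product measure $\d\mu\,\d\mu$, which is the same computation), and deduce \eqref{ineq:HLS-type} by dyadic decomposition, summing a geometric series thanks to $\rho<\alpha$, exactly as in the paper. The only blemish is a mislabelled direction in your motivation: the naive dyadic sum $\sum_j 2^{j(\alpha-\rho)}$ converges as $j\to-\infty$ precisely because $\alpha>\rho$, and it is the large-scale end $j\to+\infty$ that needs the restriction $x,x'\in I$ (the paper just notes only scales $|x-x'|\le 2$ occur); your two-range split handles this correctly in any case.
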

\begin{proof}[Proof of Lemma \ref{lem:Young and HLS-type}]
Denoting $G(x)=\|g(x,\cdot)\|_{L^1}$ and $H(x')=\|h(x',\cdot)\|_{L^1}$,
\begin{align*}
&\left|\iint\iint g(x,t)h(x',t')\chi_{[a,b]}(x-x')\,\d\mu(x)\d t\d\mu(x')\d t'\right|\\
&\le\int_{-1}^1\int_{-1}^1G(x)H(x')\chi_{[a,b]}(x-x')\d\mu(x)\d\mu(x').
\end{align*}
By invoking the Cauchy--Schwarz inequality on $L^2(I\times I,\d\mu\d\mu)$ and \eqref{alpha-dimensional},
\begin{align*}
&\int_{-1}^1\int_{-1}^1G(x)H(x')\chi_{[a,b]}(x-x')\d\mu(x)\d\mu(x')\\
&\lesssim
\left(\iint G(x)^2\chi_{[a,b]}(x-x')\,\d\mu(x)\d\mu(x')\right)^\frac12
\left(\iint H(x')^2\chi_{[a,b]}(x-x')\,\d\mu(x)\d\mu(x')\right)^\frac12\\
&
\lesssim(b-a)^{\alpha}\|G\|_{L^2_x(\d\mu)}\|H\|_{L^2_x(\d\mu)}.
\end{align*}

Now, \eqref{ineq:HLS-type} follows from \eqref{ineq:Young special case}, immediately. In fact, by applying a dyadic decomposition,
\begin{align*}
&\left|\iint\iint g(x,t)h(x',t')|x-x'|^{-\rho}\,\d \mu(x)\d t\d \mu(x')\d t'\right|\\
&\lesssim\sum_{j=0}^\infty2^{\rho j}\iint G(x)H(x')\chi_{[2^{-j},2^{-j+1}]}(x-x')\,\d\mu(x)\d\mu(x')\\
&\lesssim\sum_{j=0}^\infty 2^{(\rho-\alpha)j}\|G\|_{L^2_x(\d\mu)}\|H\|_{L^2_x(\d\mu)}\\
&\lesssim\|G\|_{L^2_x(\d\mu)}\|H\|_{L^2_x(\d\mu)}
\end{align*}
whenever $\rho-\alpha<0$.
\end{proof}

\section{Some reduction arguments}\label{sec:reduction arguments}
\subsection{Proof of (Theorem \ref{thm:maximal estimate with mu} $\Longrightarrow$ Theorem \ref{pointwise convergence with mu})}
Fix an arbitrary $f\in H^s(\mathbb R)$. Then, it is enough to show that $\mu(\mathfrak D(\gamma,f))=0$. Now, choose a sequence $\{f_n\}_{n\in\mathbb N}\subset C_0^\infty(\mathbb R)$ which converges in $H^s$-norm to $f\in H^s(\mathbb R)$. Then, we divide the divergence set into localized pieces as follows and show that all terms turn out to be $0$.
\begin{align}\label{mu div-set is less than sum sum}
\mu(\mathfrak D(\gamma,f))
\le
\sum_{j\in\mathbb Z}\sum_{\ell=1}^\infty\mu(\{x\in I+j\,:\,\lim_{t\to0}|S_tf(\gamma(x,t))-f(x)|>\ell^{-1}\}).\nonumber
\end{align}
Now, for each $n \ge 1$, $j=0$ and $\lambda\ge1$ observe that
\begin{align*}
&\mu(\{x\in I\, :\, \lim_{t\to0}|S_tf(\gamma(x,t))-f(x)|>\lambda^{-1}\})\\
&\le
\mu(\{x\in I\, :\, \limsup_{t\to0}|S_tf(\gamma(x,t))-S_tf_n(\gamma(x,t))|>(3\lambda)^{-1}\})\\
&\qquad+
\mu(\{x\in I\, :\, \limsup_{t\to0}|S_tf_n(\gamma(x,t))-f_n(x)|>(3\lambda)^{-1}\})\\
&\qquad\quad+
\mu(\{x\in I\, :\, |f_n(x)-f(x)|>(3\lambda)^{-1}\})\\
&\le
\mu(\{x\in I\,:\,\sup_{t\in I}|S_t\big(f(\gamma(x,t))-f_n(\gamma(x,t))\big)|>(3\lambda)^{-1}\})\\
&\qquad+
0
+
\mu(\{x\in I\, :\, |f_n(x)-f(x)|>(3\lambda)^{-1}\}).
\end{align*}
By invoking the Chebyshev's inequality and Theorem \ref{thm:maximal estimate with mu} we obtain
\begin{equation}\label{mu div-set is less than f-f H2}
\mu(\{x\in I\, :\, \lim_{t\to0}|S_tf(\gamma(x,t))-f(x)|>\lambda^{-1}\})\lesssim\lambda^2\|f-f_n\|_{H^s(\mathbb R)}^2,
\end{equation}
which tends to $0$ as $n\to\infty$. For other $j$, make translation $x\mapsto x+j$ and we define a measure $\mu_j$ by $\mu_j(x) = \mu(x+j)$ and a curve $\gamma_j$ by $\gamma_j(x,t) = \gamma(x+j,t)$, both of which satisfy the required conditions\footnote{
Strictly speaking, $\gamma_j \not\in \Gamma(\kappa)$ because $\gamma_j(x,0)=x+j$, however this translation effect is negligible for Theorem \ref{thm:maximal estimate with mu} since $|\gamma_j(x,t)-\gamma_j(x',t')|$ is essentially equivalent to $|\gamma(x,t)-\gamma(x',t')|$ for any $x,x',t,t'$.
}
for Theorem \ref{thm:maximal estimate with mu} so that \eqref{mu div-set is less than f-f H2} holds with $I$ replaced by $I+j$. Therefore, for all $j\in\mathbb Z$ and $\ell\ge1$ 
\[
\mu(\{x\in I+j\,:\, \lim_{t\to0}|S_tf(\gamma(x,t))-f(x)|>\ell^{-1}\})=0
\]
holds as desired. \qed

\subsection{Proof of (Theorem \ref{pointwise convergence with mu} $\Longrightarrow$ Corollary \ref{divergence set estimate})}
Let $s>\frac14$ and $f\in H^s(\mathbb R)$. If we suppose $\dim_c(\mathfrak{D}(f,\gamma))>\max\{1-2s,\frac{1-2s}{m\kappa}\}\ge0$, then one would find $0<\alpha<1$ satisfying $\dim_c(\mathfrak{D}(f,\gamma))>\alpha>\max\{1-2s,\frac{1-2s}{m\kappa}\}\ge0$. Here, note that the second inequality is equivalent to $s>\max\{\frac{1-\alpha}{2},\frac{1-m\alpha\kappa}{2}\}$. Hence, by Lemma \ref{lem:Frostman} there would exist an $\alpha$-dimensional measure $\mu$ such that $\mu(\mathfrak{D}(f,\gamma))>0$, which contradicts Theorem \ref{pointwise convergence with mu}, and we must have $\dim_c(\mathfrak{D}(f,\gamma))\le\max\{1-2s,\frac{1-2s}{m\kappa}\}$. \qed

\section{Proof of Theorem \ref{thm:maximal estimate with mu}}\label{sec:proof of maximal estimate}

Let
\[
s_*=\min\left\{\frac14,\frac\alpha2,\frac{m\alpha\kappa}{2}\right\}.
\]
By following the standard steps via Littlewood--Paley decomposition, it is enough to show the following proposition. (For the details, for instance, see  \cite{Sh19}.)
\begin{proposition}\label{prop:Sf is less than lambda f with mu}
Let $\varepsilon>0$. Then, there exists a constant $C_\varepsilon$ such that
\begin{equation}\label{ineq:SPf is less than f with mu}
\left\|\sup_{t\in I}|S_tf(\gamma(\cdot,t))|\right\|_{L^2(I,\d\mu)}\le C_\varepsilon \lambda^{\frac12-s_*+\varepsilon}\|f\|_{L^2}
\end{equation}
holds for all $\lambda\ge1$ and $f\in L^2(\mathbb{R})$ whose Fourier support is contained in $\{\xi\in\mathbb{R}:\frac{\lambda}{2}\le|\xi|\le2\lambda\}$.
\end{proposition}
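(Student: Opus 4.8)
The plan is to prove Proposition \ref{prop:Sf is less than lambda f with mu} by a linearization followed by an $L^2(\d\mu)$ duality ($TT^*$) argument, carried out \emph{without any localization in $t$} — this is the key point inherited from \cite{Sh19}. First I would replace the supremum over $t\in I$ by a measurable function $t=t(x)$, so that the left-hand side of \eqref{ineq:SPf is less than f with mu} becomes $\|S_{t(x)}f(\gamma(x,t(x)))\|_{L^2(I,\d\mu)}$, uniformly in the choice of $t(\cdot)$. Writing this as $\langle f, f\rangle$ paired through the adjoint, squaring, and expanding, the square of the left-hand side is controlled by
\[
\iint K(x,x')\,\overline{a(x)}\,a(x')\,\d\mu(x)\,\d\mu(x'),
\]
where $|a|\le 1$ and the kernel is
\[
K(x,x')=\int_{\frac\lambda2\le|\xi|\le2\lambda} e^{i\big((\gamma(x,t(x))-\gamma(x',t(x')))\xi+(t(x)-t(x'))|\xi|^m\big)}\,\d\xi,
\]
so that \eqref{ineq:SPf is less than f with mu} reduces to the bound $|\iint K\,\overline a\,a\,\d\mu\d\mu|\lesssim_\varepsilon \lambda^{1-2s_*+\varepsilon}\|f\|_{L^2}^2$; since $\|f\|_{L^2}^2$ does not appear on the right of the displayed bilinear form, what one actually needs, after restoring $f$, is $\sup_{x}\int |K(x,x')|\,\d\mu(x')\lesssim \lambda^{1-2s_*+\varepsilon}$ in an appropriate Schur-type sense, or more precisely an $L^2(\d\mu)\to L^2(\d\mu)$ bound on the integral operator with kernel $K$.

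The heart of the matter is then a pointwise estimate for the oscillatory integral $K(x,x')$. Set $X=\gamma(x,t(x))-\gamma(x',t(x'))$ and $T=t(x)-t(x')$. By van der Corput's lemma (Lemma \ref{lem:van der Corput}) applied on each of the two intervals where $\xi$ has a fixed sign, using that the phase $\phi(\xi)=X\xi+T|\xi|^m$ has $\phi''(\xi)\sim |T|\lambda^{m-2}$ on the relevant frequency annulus (here $m>1$ is used), one gets the bound $|K(x,x')|\lesssim (|T|\lambda^{m-2})^{-1/2}$ from the second-derivative estimate, while the trivial bound gives $|K(x,x')|\lesssim \lambda$, and a first-derivative/stationary-phase analysis gives decay in $X$ when $|X|\gg |T|\lambda^{m-1}$ (no stationary point in the annulus). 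Combining, one obtains roughly
\[
|K(x,x')|\lesssim \min\Big\{\lambda,\ (|T|\lambda^{m-2})^{-\frac12}\Big\}\cdot\big(1+\lambda|X| \big)^{-N}\quad\text{when } |X|\gtrsim |T|\lambda^{m-1},
\]
plus a term where $|X|\lesssim |T|\lambda^{m-1}$. Now I would feed these bounds into Lemma \ref{lem:Young and HLS-type}: the factor $\chi_{|X|\lesssim |T|\lambda^{m-1}}$ combined with the Hölder condition \eqref{cond:Holder} ($|T|\le $ some power tied to $X$ via $\gamma$) and the bilipschitz condition \eqref{cond:bilipschitz} ($|X|\sim |x-x'|$) lets one bound the contribution by the $\chi_{[a,b]}$-integral in \eqref{ineq:Young special case} with $b-a\sim$ the appropriate power of $\lambda$, producing $\lambda^{\alpha\cdot(\text{exponent})}$; the three terms $\frac14,\frac\alpha2,\frac{m\alpha\kappa}{2}$ in $s_*$ arise respectively from (i) the pure van der Corput $|T|^{-1/2}$ decay integrated in $t$ on $I$ (the classical $\frac14$), (ii) the regime where only the trivial bound $\lambda$ is available on a set of $\mu$-measure $\lesssim$ something $\cdot\lambda^{-\alpha}$ coming from \eqref{cond:bilipschitz}, and (iii) the regime where the Hölder exponent $\kappa$ forces $|T|\lesssim |X|^{1/\kappa}$, so the $|T|$-integration gains $\lambda^{m\alpha\kappa}$ after using \eqref{cond:bilipschitz} and \eqref{alpha-dimensional}. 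The $\varepsilon$-loss absorbs logarithmic factors from the dyadic summation in the $|X|$ (and $|T|$) decompositions, exactly as in the proof of \eqref{ineq:HLS-type}.

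The main obstacle I anticipate is the bookkeeping in the regime where $|T|\lambda^{m-1}$ and $|X|$ are comparable and neither the trivial bound nor the clean van der Corput bound is decisive: there one must carefully use the \emph{joint} constraints that \eqref{cond:Holder} and \eqref{cond:bilipschitz} impose on the pair $(X,T)=(\gamma(x,t(x))-\gamma(x',t(x')),\,t(x)-t(x'))$ — note $t(x),t(x')$ are the linearizing functions, not free variables, so one cannot integrate in them independently, and the correct move is to split according to dyadic size of $|x-x'|$ first, then use \eqref{cond:Holder} to bound the possible range of $|T|$, and only then apply van der Corput. Getting the three exponents to come out as the stated minimum (rather than something worse) hinges on choosing this order of decomposition correctly and on the fact that, crucially, one never localizes $t$ to a small interval — doing so is what breaks down for $m$ near $1$, whereas the present scheme handles all $m>1$ uniformly. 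A secondary technical point is verifying the $\phi''$ lower bound uniformly on $\frac\lambda2\le|\xi|\le 2\lambda$, which is where the second condition in \eqref{Phi'' is bounded below} (here automatic for $|\xi|^m$) would be invoked in the general-$\Phi$ version.
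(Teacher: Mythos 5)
Your overall frame (a $TT^*$ reduction to kernel bounds via van der Corput, with the $\mu$-dimensionality \eqref{alpha-dimensional} absorbing the singularity in $|x-x'|$, and an $\varepsilon$-loss from dyadic summation) is in the right spirit, and it differs formally from the paper: you linearize the supremum as $t=t(x)$ and aim for a Schur test in $L^2(\d\mu)$, whereas the paper never linearizes, dualizing instead the mixed norm $L^2_x(\d\mu)L^\infty_t$ against $L^2_x(\d\mu)L^1_t$, so that $t,t'$ remain free integration variables and the kernel is handled by the bilinear Lemma \ref{lem:Young and HLS-type}. A linearized Schur argument could in principle be made to work, since in each regime the kernel bounds depend only on $|x-x'|$; the point of the paper's setup is merely to avoid time localization, not linearization.

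However, your execution has genuine gaps in exactly the places where the exponent $s_*$ is produced. First, the mechanism by which \eqref{cond:Holder} enters is stated backwards: the H\"older condition does not ``bound the possible range of $|T|$'' by $|X|^{1/\kappa}$. The correct dichotomy (the paper's $V_2$/$V_3$) is: either $|t-t'|^\kappa\lesssim|x-x'|$, in which case \eqref{cond:bilipschitz} and \eqref{cond:Holder} give $|X|\gtrsim|x-x'|$ and one splits the frequency support according to whether the first derivative of the phase dominates (first-derivative van der Corput, after checking monotonicity) or the second derivative does, yielding $|K|\lesssim\lambda(\lambda|x-x'|)^{-\min\{\frac12,\alpha\}}$; or else $|t-t'|\gtrsim|x-x'|^{1/\kappa}$, a \emph{lower} bound on $|T|$, which is what makes the bound $|K|\lesssim\lambda(\lambda^m|t-t'|)^{-1/2}\lesssim\lambda(\lambda^m|x-x'|^{1/\kappa})^{-2s_*/m}$ useful and produces the $\frac{m\alpha\kappa}{2}$ term. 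Second, after linearization there is no integration in $t$ at all, so your attribution of the $\frac14$ to ``$|T|^{-1/2}$ decay integrated in $t$'' and of the $\frac{m\alpha\kappa}{2}$ to ``the $|T|$-integration'' cannot be carried out; both exponents must come from integrating powers of $|x-x'|$ against $\d\mu$, as in \eqref{ineq:HLS-type}, and the trivial-bound region contributes $\lambda\,\mu(\mathbb B(x,\delta))\lesssim\lambda\delta^\alpha$ by \eqref{alpha-dimensional} (not by \eqref{cond:bilipschitz}); the crucial choice $\delta\sim\lambda^{-2s_*/\alpha}$, which balances all three regimes at $\lambda^{1-2s_*+\varepsilon}$, is absent from your sketch. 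Finally, the claimed kernel bound of the form $\min\{\lambda,(|T|\lambda^{m-2})^{-1/2}\}(1+\lambda|X|)^{-N}$ is not obtainable (with your sharp cutoff one cannot integrate by parts repeatedly, and van der Corput gives these decays as alternatives, not as a product); only the first-order decay $(\lambda|X|)^{-1}$ is available and needed. As it stands, the proposal does not close the numerology leading to $s_*=\min\{\frac14,\frac\alpha2,\frac{m\alpha\kappa}2\}$.
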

\begin{proof}[Proof of Proposition \ref{prop:Sf is less than lambda f with mu}]

Let
\[
Tf(x,t)=\chi(x,t)\int_{\mathbb{R}}e^{i(\gamma(x,t)\xi+t|\xi|^m)}f(\xi)\psi(\tfrac\xi\lambda)\,\d\xi,
\]
where $\chi=\chi_{I\times I}$ and $\psi\in C_0^\infty((-2,-\frac12)\cup(\frac12,2))$. Then, by Plancherel's theorem, \eqref{ineq:SPf is less than f with mu} follows from
\begin{equation}\label{ineq:Tf is less than f}
\|Tf\|_{L^2_x(\d\mu)L^\infty_t}^2\lesssim\lambda^{1-2s_*+\varepsilon}\|f\|_{L^2}^2.
\end{equation}
By duality, \eqref{ineq:Tf is less than f} is equivalent to
\begin{equation}\label{ineq:TF is less than F}
\|T^*F\|_{L^2}^2\lesssim\lambda^{1-2s_*+\varepsilon}\|F\|_{L^2_x(\d\mu)L^1_t}^2,
\end{equation}
where
\[
T^*F(\xi)=\psi(\tfrac{\xi}{\lambda})\iint \chi(x',t')e^{-i(\gamma(x',t')\xi+t'|\xi|^m)}F(x',t')\,\d\mu(x')\d t'.
\]
Then,
\begin{align*}
\|T^*F\|_{L^2}^2&=\int\psi(\tfrac{\xi}{\lambda})^2\iint\iint\chi(x,t)\chi(x',t')\\
&\qquad\times e^{i((\gamma(x,t)-\gamma(x',t'))\xi+(t-t')|\xi|^m)}\overline{F}(x,t)F(x',t')\,\d\mu(x)\d t\d\mu(x')\d t'\d\xi\\
&=\int_W\int_{W'}\chi(w)\chi(w')\overline{F}(w)F(w')K_\lambda(w,w')\,\d_\mu w\d_\mu w'\\
&=\sum_{\ell=1}^3\iint_{V_\ell}\chi(w)\chi(w')\overline{F}(w)F(w')K_\lambda(w,w')\,\d_\mu w\d_\mu w'\\
&=\mathcal I_1+\mathcal I_2+\mathcal I_3.
\end{align*}
Here, we denote $W=I\times I$, $w=(x,t)\in W$, $w'\in(x',t')\in W$ and $\d_\mu w=\d\mu(x)\d t$. Also,
\begin{align*}
K_\lambda(w,w')&=\int_\mathbb{R} e^{i\phi(\xi,w,w')}\psi(\tfrac{\xi}{\lambda})^2\,\d\xi\\
&=\lambda\int_\mathbb{R} e^{i\phi(\lambda\xi,w,w')}\psi(\xi)^2\,\d\xi,
\end{align*}
\[
\phi(\xi,w,w')=(\gamma(x,t)-\gamma(x',t'))\xi+(t-t')|\xi|^m
\]
and
\[
\begin{cases}
V_1=\{(w,w')\in W\times W:|x-x'|\le2\lambda^{-\frac{2s_*}{\alpha}}\},\\
V_2=\{(w,w')\in W\times W:|x-x'|>2\lambda^{-\frac{2s_*}{\alpha}}\ {\rm and}\ \frac1{C_2}|x-x'|> 2C_1|t-t'|^\kappa\},\\
V_3=\{(w,w')\in W\times W: |x-x'|>2\lambda^{-\frac{2s_*}{\alpha}}\ {\rm and}\  \frac1{C_2}|x-x'|\le2C_1|t-t'|^\kappa\}.
\end{cases}
\]
Then, \eqref{ineq:TF is less than F} follows from 
\[
\mathcal I_\ell\lesssim\lambda^{1-2s_*+\varepsilon}\|F\|_{L^2_x(\d\mu)L^1_t}^2
\]
for each $\ell=1,2,3$.

\subsection*{The term $\mathcal I_1$} 
By using the trivial estimate 
\begin{equation}\label{ineq:trivial estimate}
|K_\lambda(w,w')|\lesssim\lambda
\end{equation}
and Lemma \ref{lem:Young and HLS-type}, we obtain
\begin{align*}
\mathcal I_1\lesssim\lambda^{1-2s_*}\|F\|_{L^2_x(\d\mu)L^1_t}^2.
\end{align*}

\subsection*{The term $\mathcal I_2$} 
In this case, first note that we have
\begin{equation}\label{ineq:gamma-gamma is less than x-x}
|\gamma(w)-\gamma(w')|\ge\frac1{2C_2}|x-x'|
\end{equation}
for $(w,w')\in V_2$ by using \eqref{cond:Holder} and \eqref{cond:bilipschitz}. Next, we split $K_\lambda$ into $\mathcal K_1$ and $\mathcal K_2$ as follows.
\begin{align*}
K_\lambda(w,w')&=\lambda\int_{U_1}e^{i\phi(\lambda\xi,w,w')}\psi(\xi)^2\,\d\xi+\lambda\int_{U_2}e^{i\phi(\lambda\xi,w,w')}\psi(\xi)^2\,\d\xi\\
&=:\mathcal K_1+\mathcal K_2,
\end{align*}
where 
\[
\begin{cases}
U_1=\{\xi\in\supp\psi:\frac1{C_2}|x-x'|>4m\lambda^{m-1}|t-t'||\xi|^{m-1}\},\\
U_2=\{\xi\in\supp\psi:\frac1{C_2}|x-x'|\le 4m\lambda^{m-1}|t-t'||\xi|^{m-1}\}.
\end{cases}
\]
For $\mathcal K_1$,
we use \eqref{ineq:gamma-gamma is less than x-x} in order to estimate the phase
\begin{align*}
\left|\frac{\d}{\d\xi}\phi(\lambda\xi,w,w')\right|&\ge\lambda|\gamma(w)-\gamma(w')|-m\lambda^m|t-t'||\xi|^{m-1}\\
&\ge\frac1{2C_2}\lambda|x-x'|-m\lambda^m|t-t'||\xi|^{m-1}\\
&>\frac1{4C_2}\lambda|x-x'|\\
&\gtrsim\lambda^{1-\frac{2s_*}{\alpha}}\\
&>1
\end{align*}
since $\frac{2s_*}{\alpha}=\min\{\frac{1}{2\alpha},1,m\kappa\}\le1$. Here, note that the interval $U_1$ consists of at most two intervals since $\frac{\d}{\d\xi}\phi(\lambda\xi,w,w')$ is monotone on each interval $(-\infty,-1]$ and $[1,\infty)$. Thus, Lemma \ref{lem:van der Corput} gives that 
\begin{align*}
\mathcal K_1\lesssim\lambda(\lambda|x-x'|)^{-1}\lesssim\lambda(\lambda|x-x'|)^{-\min\{\frac12,\alpha\}}.
\end{align*}

On the other hand, for $\mathcal K_2$, 
\begin{align*}
\left|\frac{\d^2}{\d\xi^2}\phi(\lambda\xi,w,w')\right|&\gtrsim\lambda^m|t-t'||\xi|^{m-2}\\
&\gtrsim\lambda|x-x'|
\end{align*}
so that we are allowed to apply Lemma \ref{lem:van der Corput} to obtain
\begin{align*}
\mathcal K_2&\lesssim\lambda(\lambda|x-x'|)^{-\frac12}\\
&\lesssim\lambda(\lambda|x-x'|)^{-\min\{\frac12,\alpha\}}.
\end{align*}
Hence, for $(w,w')\in V_2$ we have the following kernel estimate
\begin{align*}
|K_\lambda(w,w')|&\lesssim\lambda^{1-\min\{\frac12,\alpha\}}|x-x'|^{-\min\{\frac12,\alpha\}}\\
&\lesssim\lambda^{1-\min\{\frac12,\alpha\}+\varepsilon}|x-x'|^{-\min\{\frac12,\alpha\}+\varepsilon}.
\end{align*}
Here, we used the separation assumption, $|x-x'|\gtrsim\lambda^{-\frac{2s_*}{\alpha}}$.
By Lemma \ref{lem:Young and HLS-type} with $\rho=\min\{\frac12,\alpha\}-\varepsilon<\alpha$ we conclude that
\begin{align*}
\mathcal I_2&\lesssim\lambda^{1-\min\{\frac12,\alpha\}+\varepsilon}\|F\|_{L^2_x(\d\mu)L^1_t}^2\\
&\lesssim\lambda^{1-2s_*+\varepsilon}\|F\|_{L^2_x(\d\mu)L^1_t}^2.
\end{align*}

\subsection*{The term $\mathcal I_3$}
 It remains to consider $\mathcal I_3$. Observe that
\begin{align*}
\left|\frac{\d^2}{\d\xi^2}\phi(\lambda\xi,w,w')\right|&\gtrsim\lambda^m|t-t'||\xi|^{m-2}\\
&\gtrsim\lambda^m|x-x'|^\frac1\kappa\\
&\gtrsim\lambda^m\lambda^{-\frac{2s_*}{\alpha\kappa}}\\
&>1
\end{align*}
since $\frac{2s_*}{\alpha\kappa}=\min\{\frac{1}{2\alpha\kappa},\frac{1}{\kappa},m\}\le m$. Then, by Lemma \ref{lem:van der Corput} for arbitrary small $\varepsilon>0$
\begin{align*}
|K_\lambda(w,w')|&\lesssim\lambda(\lambda^m|x-x'|^\frac1\kappa)^{-\frac12}\\
&\lesssim\lambda(\lambda^m|x-x'|^{\frac1\kappa})^{-\frac{2s_*}{m}}\\
&\sim\lambda^{1-2s_*}|x-x'|^{-\frac{2s_*}{m\kappa}}\\
&\lesssim\lambda^{1-2s_*+\varepsilon}|x-x'|^{-\frac{2s_*}{m\kappa}+\varepsilon}
\end{align*}
since $\frac{2s_*}{m}=\min\{\frac{1}{2m},\frac{\alpha}{m},\alpha\kappa\}<\frac12$ and our separation assumption again. Therefore, applying Lemma \ref{lem:Young and HLS-type} with $\rho= \frac{2s_*}{m\kappa}-\varepsilon=\min\{\frac{1}{2m\kappa},\frac{\alpha}{m\kappa},\alpha\}-\varepsilon<\alpha$, it follows that
\[
\mathcal I_3\lesssim\lambda^{1-2s_*+\varepsilon}\|F\|_{L^2_x(\d\mu)L^1_t}^2.
\]

\end{proof}

\section{The necessary conditions regarding Theorem \ref{thm:maximal estimate with mu}}\label{sec:counterexamples}
In this section, we present $s\ge\max\left\{\frac14, \frac{1-\alpha}{2}, \frac{1-m\alpha\kappa}{2} \right\}$ is necessary for Theorem \ref{thm:maximal estimate with mu}, otherwise there exist $\gamma\in\Gamma(\kappa)$ and $\alpha$-dimensional measure $\mu$ such that \eqref{ineq:maximal estimate with mu} fails.
Throughout the section, we shall let $\lambda\ge1$, $\gamma(x,t)=x-t^\kappa$, $\mu(x)=|x|^{-1+\alpha}\,\d x$ and $\psi_0$ be a smooth radial bump function whose support is in a small neighborhood of the origin. Also, we fix $m>1$ and $0<\kappa\le1$, and we assume that the maximal estimate \eqref{ineq:maximal estimate with mu} holds.

\subsection*{The necessity of $s\ge\max\{\frac{1-\alpha}{2},\frac{1-m\alpha\kappa}{2}\}$}
In this case, we will simply follow the idea in \cite{CLV12}.
Let
\[
\widehat{f_1}(\xi)=\psi_0(\lambda^{-\frac1m}\xi).
\]
With this initial data, 
\begin{align*}
|S_tf_1(\gamma(x,t))|&\sim\left|\int e^{i((x-t^\kappa)\xi+t|\xi|^m)}\widehat{f_1}(\xi)\,\d\xi\right|\\
&=\lambda^{\frac1m}\left|\int e^{i\phi_1(\eta,x,t)}\psi_0(\eta)\,\d\eta\right|,
\end{align*}
where \[
\phi_1(\eta,x,t)=\lambda^{\frac1m}(x-t^\kappa)\eta+\lambda t|\eta|^m.
\]
For
$x\in(0,\frac{1}{100}(\lambda^{-\frac1m}+\lambda^{-\kappa}))$ and $|t|<\frac{1}{100}\lambda^{-1}$, we have
\[
|\phi_1(\eta,x,t)|=|\lambda^{\frac1m}(x-t^\kappa)\eta+\lambda t|\eta|^m)|\le\frac12
\]
so that
\begin{align*}
|S_tf_1(\gamma(x,t))|
&\gtrsim
\lambda^{\frac1m}\left|\int(\cos\phi_1(\eta,x,t))\psi_0(\eta)\,\d\eta\right|\\
&\gtrsim
\lambda^\frac1m\chi_{(0,\frac{1}{100}(\lambda^{-\frac1m}+\lambda^{-\kappa}))\times(0,\frac{1}{100}\lambda^{-1})}(x,t).
\end{align*}
Hence,
\begin{align*}
\left\|\sup_{t\in I}|S_tf_1(\gamma(\cdot,t))|\right\|_{L^2(I,\d\mu)}
&\ge
\left\|\sup_{t\in(0,\frac{1}{100}\lambda^{-1})}|S_tf(\gamma(\cdot,t))|\right\|_{L^2((0,\frac{1}{100}(\lambda^{-\frac1m}+\lambda^{-\kappa})),\d\mu)}\\
&\gtrsim\lambda^\frac1m\max\{\lambda^{-\frac\alpha m},\lambda^{-\alpha\kappa}\}^{\frac12}.
\end{align*}
On the other hand,
\begin{align*}
\|f_1\|_{H^s}&\sim\left(\int(1+|\xi|^2)^s|\psi_0(\lambda^{-\frac1m}\xi)|^2\,\d\xi\right)^\frac12\\
&\lesssim\lambda^{\frac sm}\lambda^{\frac{1}{2m}}.
\end{align*}
Therefore, combining the above calculations, we obtain
\[
\lambda^\frac1m\max\{\lambda^{-\frac\alpha m},\lambda^{-\alpha\kappa}\}^{\frac12}\lesssim\lambda^{\frac sm}\lambda^{\frac{1}{2m}}.
\]

If $\frac1m\ge\kappa$, then $\max\{\lambda^{-\frac\alpha m},\lambda^{-\alpha\kappa}\}=\lambda^{-\alpha\kappa}$, and as $\lambda\to\infty$ it is necessary that
\[
\frac 1m-\frac{\alpha\kappa}{2}\le\frac sm+\frac{1}{2m},
\]
which clearly gives
\[
s\ge\frac{1-m\alpha\kappa}{2}.
\]

If $\frac1m<\kappa$, then $\max\{\lambda^{-\frac\alpha m},\lambda^{-\alpha\kappa}\}=\lambda^{-\frac\alpha m}$, and as $\lambda\to\infty$, it is necessary that 
\[
\frac1m-\frac{\alpha}{2m}\le\frac sm+\frac{1}{2m},
\]
which is
\[
s\ge\frac{1-\alpha}{2}.
\]

\subsection*{The necessity of $s\ge\frac14$}
In this case, we will refer to the idea in \cite{Sj87} (see page 712). Let
\[
\widehat{f_2}(\xi)=\lambda^{-1}\psi_0(\lambda^{-1}\xi+\lambda).
\]
Then, by the change of variables $-\eta=\lambda^{-1}\xi+\lambda$, 
\begin{align*}
|S_tf_2(\gamma(x,t))|&\sim\left|\int e^{i((x-t^\kappa)\xi+t|\xi|^m)}\lambda^{-1}\psi_0(\lambda^{-1}\xi+\lambda)\,\d\xi \right|\\
&=\left|\int e^{i\phi_2(\eta,x,t)}\psi_0(-\eta)\,\d\eta\right|,
\end{align*}
where
\[
\phi_2(\eta,x,t)=-(x-t^\kappa)\lambda\eta+\lambda^m t|\lambda+\eta|^m.
\]
By a Taylor expansion,
\begin{align*}
(\lambda+\eta)^m&=\lambda^m(1+\lambda^{-1}\eta)^m\\
&=\lambda^m\left(1+m\lambda^{-1}\eta+\frac{m(m-1)}{2}\lambda^{-2}\eta^2+O(\lambda^{-3}|\eta|^3)\right)\\
&=\lambda^m+m\lambda^{-(1-m)}\eta+\frac{m(m-1)}{2}\lambda^{-(2-m)}\eta^2+O(\lambda^{-(3-m)}|\eta|^3),
\end{align*}
and it follows that
\begin{align*}
\phi_2(\eta,x,t)
&=
-\lambda x\eta+\lambda t^\kappa\eta+\lambda^{2m}t+m\lambda^{-(1-2m)}t\eta\\
&\qquad+
\frac{m(m-1)}{2}\lambda^{-(2-2m)}t\eta^2+O(\lambda^{-(3-2m)}t|\eta|^3)\\
&=
\lambda^{2m}t+\lambda (-x+t^\kappa+m\lambda^{-(2-2m)}t)\eta\\
&\qquad+
\frac{m(m-1)}{2}\lambda^{-(2-2m)}t\eta^2+O(\lambda^{-(3-2m)}t|\eta|^3).
\end{align*}
For $x\in(0,\frac{1}{100})$, we can choose $t(x)$ such that $x=t(x)^\kappa+m\lambda^{-(2-2m)}t(x)$. In fact, if we consider the function $\tau(t) = t^{\kappa} +m\lambda^{-(2-2m)}t$, then $\tau:[0,\infty)\to[0,\infty)$ is a strictly increasing bijection and 
$$0=\tau^{-1}(0)<\tau^{-1}(x)=t(x)<\tau^{-1}(\tfrac{1}{100})<\frac{\lambda^{2-2m}}{100m}.$$
Therefore, for such choice of $(x,t(x))$, it follows that
\[
|\phi_2(\eta,x,t(x))-\lambda^{2m}t(x)|\lesssim 0+\frac{1}{100}+O(\lambda^{-1})\le\frac12,
\]
which implies that
\begin{align*}
|S_tf_2(\gamma(x,t(x)))|
&\sim
\left|\int\cos(\phi_2(\eta,x,t(x))-\lambda^{2m}t(x))\psi_0(-\eta)\,\d\eta\right|\\
&\gtrsim
\chi_{(0,\frac{1}{100})}(x).
\end{align*}
Hence,
\[
\left\|\sup_{t\in I}|S_tf_2(\gamma(\cdot,t))|\right\|_{L^2(I,\d\mu)}
\gtrsim
1.
\]
On the other hand,
\begin{align*}
\|f_2\|_{H^s}&=\left(\int(1+|\xi|^2)^s|\lambda^{-1}\psi_0(\lambda^{-1}\xi+\lambda)|^2\,\d\xi\right)^\frac12\\
&\lesssim\lambda^{2s}\lambda^{-\frac12}.
\end{align*}
Therefore, combining the calculations above implies that
\[
1\lesssim \lambda^{2s}\lambda^{-\frac12}.
\]
As $\lambda\to\infty$, it is necessary that 
\[
s\ge\frac14.
\]
This ends the proof that $s\ge\max\left\{\frac14, \frac{1-\alpha}{2}, \frac{1-m\alpha\kappa}{2} \right\}$ is necessary for \eqref{ineq:maximal estimate with mu} to hold.


\subsection*{Acknowledgment} The authors would like to thank (the second author's adviser) Neal Bez for the constant encouragement and constructive suggestions. The second author also wishes to express his great appreciation to Sanghyuk Lee for introducing him to the problem.


\end{document}